\newtheorem{theorem}{Theorem}[section]
\newtheorem{thm}[theorem]{Theorem}
\newtheorem{prop}[theorem]{Proposition}
\newtheorem{lem}[theorem]{Lemma}
\newtheorem{conj}[theorem]{Conjecture}
\makeatletter \@addtoreset{equation}{section}
\def\CT{\mathop{\mathrm{CT}}\limits}
\def\H{\mathcal{H}}
\def\m{\boldsymbol{m}}
\def\RR{\mathbb{R}}
\def\TC{\widetilde{C}}
\def\x{\boldsymbol{x}}
\title{A proof of Xin-Zhang's tridiagonal determinant conjecture (extended version)}
\author[Jiaqiang Hu and Chen Zhang]{Jiaqiang Hu$^1$ and Chen Zhang$^{2,*}$}
\address{$^{1}$ School of Mathematical Sciences, LPMC, Nankai University, Tianjin 300071, PR China \\
$^{2}$ Center for Combinatorics, LPMC, Nankai University, Tianjin 300071, PR China}
\email{$^1$\texttt{2210472@mail.nankai.edu.cn}\ \& $^2$\texttt{ch\_enz@163.com}}
\date{\today}
\thanks{$*$ Corresponding author.}
\begin{document}

\begin{abstract}
We confirm a recent conjecture of Xin and Zhang, which establishes a simple product formula for the characteristic polynomial of an $(n-1) \times (n-1)$ tridiagonal matrix $C$. This characteristic polynomial arises from a recurrence relation that enumerates $n \times n$ nonnegative integer matrices with all row and column sums equal to $t$, also called the Ehrhart polynomial of the $n$th Birkhoff polytope. Moreover, we extend our method to broader families of tridiagonal matrices. 
\end{abstract}
\maketitle

\noindent
\begin{small}
 \emph{MSC2020}: Primary 15A15; Secondary 05A10, 05A19.
\end{small}

\noindent
\begin{small}
\emph{Keywords}: determinant; tridiagonal matrix; characteristic polynomial; combinatorial identity; Pascal's triangle.
\end{small}

\section{Introduction}
Our main objective in this paper is to prove the following conjecture of Xin and Zhang \cite{XZ24+}, which gives a product formula for the characteristic polynomial of certain tridiagonal matrix.
\begin{conj}[{\cite[Conjecture 4.8]{XZ24+}}]\label{conj-detC}
Let $n$ be a positive integer, and let $C$ be the $(n-1) \times (n-1)$ matrix defined by
\[
C_{i,j} = \begin{cases}
            t - (n-2i + 2)(n-1-i) - 1, & \text{if } i = j; \\
            (i-1)(n-i), & \text{if } i = j + 1; \\
            - (n-1-i)(n-i), & \text{if } i = j - 1; \\
            0, & \text{otherwise}.
          \end{cases}
\]
Then
\[
\det C = \begin{cases}
           (t - n + 1) \prod_{i=0}^{r-2} \big( t - n + 1 - r(r-1) + i(i+1) \big)^2, & \text{if } n = 2r; \\
           (t - n + 1)\big(t - n + 1 - (r-1)^2\big) \prod_{i=1}^{r-2} \big( t - n + 1 - (r - 1)^2 + i^2 \big)^2, & \text{if } n = 2r - 1.
         \end{cases}
\]
\end{conj}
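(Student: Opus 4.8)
The plan is to follow the route flagged in the abstract: strip off the variable $t$, triangularize the resulting constant matrix explicitly, and read the determinant off the diagonal. First I would write $C = (t-n+1)I - N$, where $N$ is the constant tridiagonal matrix with entries $N_{i,i} = (n-2i+2)(n-1-i) + 2 - n$, $N_{i,i+1} = (n-1-i)(n-i)$, and $N_{i,i-1} = -(i-1)(n-i)$; subtracting the scalar matrix $(t-n+1)I$ is exactly the ``shift by a scalar multiple of the identity.'' Since conjugation by a constant matrix commutes with $(t-n+1)I$, once I produce an invertible constant $P$ with $P^{-1}NP = L$ lower triangular, the matrix $P^{-1}CP = (t-n+1)I - L$ is again lower triangular, so that $\det C = \prod_i \big(t-n+1 - L_{i,i}\big)$.

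Before attempting the triangularization I would record the target in a more symmetric form. A short reindexing shows that the claimed formula is equivalent to the single product
\[
\det C = \prod_{a=0}^{n-2}\big(t - n + 1 - a(n-1-a)\big),
\]
the even/odd split merely reflecting how the values $a(n-1-a)$ pair up under $a \leftrightarrow n-1-a$: interior values occur twice, the endpoint $a=0$ always contributes the simple factor $t-n+1$, and for odd $n$ the fixed point $a=\tfrac{n-1}{2}$ is an integer and contributes the extra simple factor $t-n+1-\big(\tfrac{n-1}{2}\big)^2$. This makes the goal sharp: the diagonal of $L$ must be the multiset $\{\,a(n-1-a) : a = 0,1,\dots,n-2\,\}$, and nothing about the (irrelevant) subdiagonal part of $L$ needs to be controlled.

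The heart of the argument is the construction of $P$ and the proof that $P^{-1}NP$ has no entries above the diagonal. I would search for $P$ built from signed binomial coefficients or ratios of factorials—reflecting the Birkhoff-polytope origin of $C$—and, writing the requirement as $NP = PL$, exploit the tridiagonality of $N$: read column by column, the identity collapses to a three-term relation tying each column of $P$ to its two neighbours together with the prescribed diagonal value $a(n-1-a)$. Verifying this relation should reduce to a Chu--Vandermonde type summation, established either directly or by induction on the column index.

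The main obstacle is precisely this step: finding the correct $P$ is the ``unexpected observation,'' and it is complicated by the fact that $N$ is genuinely defective. Already for $n=4$ the value $a(n-1-a)=2$ (occurring at $a=1,2$) admits only a one-dimensional eigenspace, forcing a $2\times 2$ Jordan block; hence $P$ cannot consist of eigenvectors alone but must interleave generalized eigenvectors, and the diagonal values must be arranged in an order compatible with a complete $N$-invariant flag $V_1 \supset V_2 \supset \cdots$. Once $P$ and the triangularity identity are secured, extracting the diagonal and repackaging the product into the stated even and odd closed forms is routine bookkeeping.
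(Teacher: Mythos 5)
Your framing is exactly the paper's: write $C = (t-n+1)I - N$ (your $N$ is precisely the paper's matrix $\widetilde{C}^{(n-1)}$), conjugate $N$ by a constant matrix into lower triangular form, and read the determinant off the shifted diagonal. Your unified product $\prod_{a=0}^{n-2}\big(t-n+1-a(n-1-a)\big)$ is also the formula the paper arrives at, and your remark that $N$ is defective (so that $P$ cannot consist of eigenvectors alone) is correct and consistent with the fact that the paper's triangular form retains a nonzero subdiagonal. However, the proposal has a genuine gap: the conjugating matrix is never produced and the triangularity is never verified. That step is not peripheral --- it is the entire content of the proof --- and you explicitly defer it (``the main obstacle is precisely this step''). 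A plan that says ``search for $P$ built from signed binomial coefficients and verify a Chu--Vandermonde-type relation'' does not establish the conjecture until a specific $P$ is exhibited and the relation is checked; in particular, nothing in your write-up rules out the possibility that no binomial-type $P$ works, or pins down the order in which the repeated diagonal values must appear along the invariant flag.

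For the record, the paper's answer is $U_{i,j} = \binom{n-i}{n-j}$ (stated for the $n\times n$ matrix $\widetilde{C}^{(n)}$; for your $N=\widetilde{C}^{(n-1)}$ replace $n$ by $n-1$), with inverse $(U^{-1})_{i,j} = (-1)^{i+j}\binom{n-i}{n-j}$. The verification that $U\widetilde{C}^{(n)}U^{-1}$ is lower triangular with diagonal entries $i(n-i)$ and subdiagonal entries $(n+1-i)(1-i)$ is a direct two-page computation using Pascal's rule, the absorption identity $n\binom{n-1}{k-1}=k\binom{n}{k}$, the trinomial revision $\binom{n}{k}\binom{k}{j}=\binom{n}{j}\binom{n-j}{k-j}$, and the collapse of the resulting alternating sums via $(1+(-1))^{j-i}$. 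So your guess about the shape of $P$ and the nature of the required identity is on target, but until that matrix is written down and the computation carried out, the conjecture is not proved.
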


This conjecture arises in the study of the Ehrhart polynomial $H_n(t)$ of the $n$th Birkhoff polytope, where $H_n(t)$ counts the number of $n \times n$ nonnegative integer matrices with all row and column sums equal to $t$. The computation of $H_n(t)$ has been a subject of extensive research; see, for example, \cite{AHZ25, CR99, DLY09, MacM60, Mou00, Sta73, Sta76, XZZZ25}. So far, $H_n(t)$ is known only for $n \leq 9$, with $H_9(t)$ computed by Beck and Pixton \cite{BP03} using residue techniques.

Following \cite[Corollary 4]{BP03}, it is known that
\[
H_n(t) = \sum_{\m} \binom{n}{\m} \CT_{\x} \mathcal{H}^{\m}(t),
\]
where the sum is over all nonnegative integer sequences $\m = (m_1, m_2, \dots, m_n)$ such that $\sum_{i=1}^n m_i = n$ and $\sum_{i=1}^r m_i > r$ for all $1 \le r < n$. Here, $\CT_{\x} f$ denotes the constant term of the Laurent series $f$ in $\x = (x_1, x_2, \dots, x_n)$, and
\[
\mathcal{H}^{\m}(t) := \prod_{i=1}^{n} \frac{x_i^{(m_i-1)t}}{\prod_{j=1,j \neq i}^n (1 - x_j/x_i)^{m_i}}.
\]

In particular, for $\m = (2, 1^{n-2})$, we define
\begin{align*}
h_n(t) &:= (-1)^{\frac{(n-1)(n-2)}{2}} \CT_{\x} \mathcal{H}^{(2, 1^{n-2}, 0)}(t) \\
&= \CT_{\x} \frac{(x_n/x_1)^{-t} \prod_{i=2}^{n - 1} (x_i/x_1)^{2 i - n} \prod_{i=1}^{n-1} (1 - x_n/x_i)}{\prod_{i=2}^{n} (1 - x_i/x_1)^3 \prod_{2 \le i < j \le n} (1 - x_j/x_i)^2}.
\end{align*}

Noting the similarity with the Morris constant term identity studied in \cite{BV01} and \cite{XinPhD04}, Xin and Zhang \cite{XZ24+} derived a recurrence for $h_n(t)$ of the form
\begin{equation}\label{e-hrec}
h_n(t) = - \frac{1}{c_0(t+n-1)}\sum_{i=1}^{n-1} c_i(t+n-1) h_n(t-i),
\end{equation}
in which $c_0(t) = \det C$. Therefore, if Conjecture~\ref{conj-detC} holds, then \eqref{e-hrec} indeed yields a recurrence when $t > \big\lfloor \frac{(n-1)^2}{4} \big\rfloor = \begin{cases}
                                r(r-1), & \mbox{if } n = 2 r; \\
                                (r-1)^2, & \mbox{if } n = 2 r - 1.
                              \end{cases}$

Indeed, the ideas in \cite{XZ24+} also apply to $\CT_{\x} \H^{(k, 1^{n-k}, 0^{k-1})}(t)$ for $3 \le k \le n-1$. Moreover, the determinant $\det C$ arises in a similar but more complicated recurrence. These aspects will be elaborated in an upcoming paper \cite{Z+}.

Further developments motivate us to consider the equivalent determinant of 
\[
tI-\TC^{(n-1)} = C + (n - 1) I,
\]
where $I$ is the identity matrix, and $\TC^{(n)}$ is an $n \times n$ tridiagonal matrix with entries
\begin{equation}\label{e-TC}
(\TC^{(n)})_{i,j} = \begin{cases}
            (n+1-i)(n+1-2i), & \text{if } i = j; \\
            (n+1-i)(1-i), & \text{if } i = j + 1; \\
            (n+1-i)(n-i), & \text{if } i = j - 1; \\
            0, & \text{otherwise}.
          \end{cases}
\end{equation}

It is still hard to determine the characteristic polynomial of $\TC^{(n)}$: none of the minors of $tI-\TC^{(n)}$ admit a simple factorization. This prevents us from proving the conjecture by traditional techniques such as diagonalization, LU decompositions, continued fractions, etc.. Ultimately, we discovered that $\TC^{(n)}$ is similar to a lower triangular matrix. We state this result as follows.
\begin{thm}\label{thm-main}
For a positive integer $n$, let $\TC^{(n)}$ be the matrix defined in \eqref{e-TC}. There exists an $n \times n$ nonsingular upper triangular matrix $U$ with entries
\begin{equation}\label{e-U}
U_{i,j} = \binom{n-i}{n-j},
\end{equation}
such that $U \TC^{(n)} U^{-1}$ is a lower triangular matrix whose entries are given by
\[
(U \TC^{(n)} U^{-1})_{i,j} = \begin{cases}
            i(n-i), & \text{if } i = j; \\
            (n+1-i)(1-i), & \text{if } i = j + 1; \\
            0, & \text{otherwise}.
          \end{cases}
\]
Consequently, Conjecture \ref{conj-detC} holds and we have the nicer formula
\[
\det C = \prod_{i=1}^{n-1} (t - n + 1 - i (n-1-i)).
\]
\end{thm}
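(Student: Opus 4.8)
The plan is to prove the similarity statement $U\TC^{(n)}U^{-1}=L$ directly, where $L$ denotes the claimed lower triangular matrix, and then read off $\det C$ as a routine consequence. First I would record that $U$ is unit upper triangular: its diagonal entries are $U_{i,i}=\binom{n-i}{n-i}=1$ and $U_{i,j}=\binom{n-i}{n-j}=0$ whenever $i>j$, so $U$ is invertible and the conjugation is well defined. Since inverting $U$ explicitly is awkward, I would avoid it entirely and instead verify the equivalent identity $U\TC^{(n)}=LU$ entry by entry.

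The core computation is to expand both sides at position $(i,k)$. Because $\TC^{(n)}$ is tridiagonal and $L$ is lower bidiagonal, each side collapses to a short sum of binomial coefficients: the left side involves $\binom{n-i}{n-k+1}$, $\binom{n-i}{n-k}$, $\binom{n-i}{n-k-1}$ with the explicit tridiagonal weights, while the right side involves $\binom{n-i+1}{n-k}$ and $\binom{n-i}{n-k}$ with the bidiagonal weights. Writing $m=n-i$ and $s=n-k$, I would apply Pascal's rule $\binom{m+1}{s}=\binom{m}{s}+\binom{m}{s-1}$ on the right and the absorption identities $(s+1)\binom{m}{s+1}=(m-s)\binom{m}{s}$ and $s\binom{m}{s}=(m-s+1)\binom{m}{s-1}$ to reconcile the three binomials on the left with the two on the right. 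After clearing the common binomial factor, the whole identity reduces to the single polynomial identity
\[
\big[(s+2)(m-s)+(s+1)(2s-n+1)\big](m-s+1)-s^2(n-s)=(2m-n+1)(m-s+1)+s(m+1)(m-n+1),
\]
which holds by a direct expansion (both sides equal $sm^2+2m^2-nsm-nm+3m-n+1$). Since each $\binom{m}{j}$ is a polynomial in $m$, this verifies $U\TC^{(n)}=LU$ for all indices, the boundary cases $s=0$ and $s=n$ being covered by the convention $\binom{m}{-1}=0$.

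Granting the similarity, $\TC^{(n)}$ and $L$ share a characteristic polynomial, so $\det(tI-\TC^{(n)})=\prod_{i=1}^{n}\big(t-i(n-i)\big)$. I would then invoke the identification $tI-\TC^{(n-1)}=C+(n-1)I$ recorded above, i.e. $C=(t-n+1)I-\TC^{(n-1)}$, so that $\det C$ is the characteristic polynomial of $\TC^{(n-1)}$ evaluated at $t-n+1$; this at once gives the compact formula $\det C=\prod_{i=1}^{n-1}\big(t-n+1-i(n-1-i)\big)$.

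Finally, to recover the exact shape of Conjecture~\ref{conj-detC} I would reindex this product using the symmetry $i\mapsto n-1-i$ of the factor $i(n-1-i)$. In the even case $n=2r$ the single unpaired index $i=n-1$ contributes the lone factor $t-n+1$, and the substitution $i=r-1-j$ turns the paired factors into $\prod_{j=0}^{r-2}\big(t-n+1-r(r-1)+j(j+1)\big)^2$; in the odd case $n=2r-1$ the fixed point $i=r-1$ contributes $t-n+1-(r-1)^2$ and the same substitution yields $\prod_{j=1}^{r-2}\big(t-n+1-(r-1)^2+j^2\big)^2$. The main obstacle is not any single step but the discovery of the conjugating matrix $U$: once the Pascal matrix $\binom{n-i}{n-j}$ is in hand the verification is mechanical, whereas the determinant and reindexing steps are entirely routine.
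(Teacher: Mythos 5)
Your proposal is correct, and it verifies the similarity by a genuinely different (and somewhat lighter) route than the paper. The paper first proves an explicit formula for the inverse, $(U^{-1})_{i,j}=(-1)^{i+j}\binom{n-i}{n-j}$ (its Lemma \ref{lem-V}), and then evaluates the full double sum for $(U\TC^{(n)}U^{-1})_{i,j}$, collapsing the resulting alternating binomial sums via factors of $(1+(-1))^{j-i}$. You instead check the equivalent identity $U\TC^{(n)}=LU$, which never requires $U^{-1}$: each entry comparison is a three-term versus two-term binomial relation, and after Pascal and the absorption identities it reduces to the single polynomial identity you display, whose two sides indeed both expand to $sm^2+2m^2-nsm-nm+3m-n+1$ (I checked this). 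Your handling of the subsequent determinant computation and the reindexing that recovers the exact even/odd form of Conjecture~\ref{conj-detC} is also correct, and is slightly more explicit than the paper, which leaves that reindexing to the reader. Two small remarks. First, ``clearing the common binomial factor'' implicitly multiplies by $s$ and divides by $\binom{m}{s-1}$; the only index where this genuinely degenerates is $s=0$ (the last column), where one should either check the entry directly or note that the coefficient of $\binom{m}{s}$ in the difference vanishes at $s=0$ --- your parenthetical about $\binom{m}{-1}=0$ gestures at this but the justification ``each $\binom{m}{j}$ is a polynomial in $m$'' addresses the wrong variable. Second, the trade-off of your approach is that the paper's heavier computation of the full conjugate $U\TC^{(n)}U^{-1}$ is reused: it yields Proposition~\ref{prop-lowpart} as a byproduct and is the template for the generalization in Theorem~\ref{thm-gen}, whereas the cross-multiplied identity $U\TC^{(n)}=LU$ would have to be redone there.
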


For instance,
\[
\TC^{(4)} = \left(\begin{array}{cccc}
  12 & 12 & 0 & 0  \\
  -3 & 3 & 6 & 0  \\
  0 & -4 & - 2 & 2  \\
  0 & 0 & -3 & - 3  \\
\end{array}\right),
U = \left(\begin{array}{cccc}
  1 & 3 & 3 & 1  \\
  0 & 1 & 2 & 1 \\
  0 & 0 & 1 & 1  \\
  0 & 0 & 0 & 1  \\
\end{array}\right),
U \TC^{(4)} U^{-1} = \left(\begin{array}{cccc}
  3 & 0 & 0 & 0  \\
  -3 & 4 & 0 & 0  \\
  0 & -4 & 3 & 0  \\
  0 & 0 & -3 & 0  \\
\end{array}\right).
\]

The rest of this paper is Section \ref{sec-pfmain}, in which we prove Theorem \ref{thm-main} using an explicit formula of $U^{-1}$, together with several combinatorial identities. In Section \ref{sec-gen}, we extend these ideas to prove a natural generalization of Theorem \ref{thm-main}.

\section{The proof of Theorem \ref{thm-main}}\label{sec-pfmain}
Our proof of Theorem \ref{thm-main} is based on the following well-known combinatorial identities (the proofs are omitted):
\begin{align}
  \binom{n+1}{k} &= \binom{n}{k} + \binom{n}{k-1}, \label{e-comid1} \\
  n \binom{n-1}{k-1} &= k \binom{n}{k} = (n-k+1) \binom{n}{k-1}, \label{e-comid2} \\
  \binom{n}{k} \binom{k}{j} &= \binom{n}{j} \binom{n-j}{k-j} = \binom{n}{n-k+j} \binom{n-k+j}{j},  \label{e-comid3}
\end{align}
where $n \ge k \ge j$ are nonnegative integers.

\begin{proof}[Proof of Theorem \ref{thm-main}]
By \cite[Theorem 1]{CV93}, the entries of $U^{-1}$ are given by
\[
(U^{-1})_{i,j} =(-1)^{i+j} U_{i,j}= (-1)^{i+j} \binom{n-i}{n-j}.
\]
Consider the $(i,j)$-entry of $U \TC^{(n)} U^{-1}$:
\begin{align*}
(U \TC^{(n)} U^{-1})_{i,j} &= \sum_{1 \le l,m \le n} U_{i,l} (\TC^{(n)})_{l,m} (U^{-1})_{m,j}  \\
&= \sum_{l=1}^n  U_{i,l} (\TC^{(n)})_{l,l} (U^{-1})_{l,j} + \sum_{l=2}^n  U_{i,l} (\TC^{(n)})_{l,l-1} (U^{-1})_{l-1,j} \\
& \quad + \sum_{l=1}^{n-1} U_{i,l} (\TC^{(n)})_{l,l+1} (U^{-1})_{l+1,j},
\end{align*}
where the second equality holds by $(\TC^{(n)})_{l,m} = 0$ when $|l-m|>1$. Observe that $(\TC^{(n)})_{l,l} = (\TC^{(n)})_{l,l-1} + (\TC^{(n)})_{l,l+1}$, and the formulas of $(\TC^{(n)})_{l,l-1}$ and $(\TC^{(n)})_{l,l+1}$ vanish if setting $l=1$ and $l=n$, respectively. The computation is naturally divided into two parts, denoted by $S_1$ and $S_2$, respectively. In detail,
\begin{align*}
S_1 &= \sum_{l=1}^n  U_{i,l} (\TC^{(n)})_{l,l-1} (U^{-1})_{l,j} + \sum_{l=2}^n  U_{i,l} (\TC^{(n)})_{l,l-1} (U^{-1})_{l-1,j} \\
&= \sum_{l=1}^n  U_{i,l} (\TC^{(n)})_{l,l-1} ((U^{-1})_{l,j} + (U^{-1})_{l-1,j}) \\
&= \sum_{l=1}^n  U_{i,l} (\TC^{(n)})_{l,l-1} (-1)^{j+l-1} \Big( \binom{n-l+1}{n-j} - \binom{n-l}{n-j} \Big) \\
&= \sum_{l=1}^n  U_{i,l} (\TC^{(n)})_{l,l-1} (-1)^{j+l-1} \binom{n-l}{n-j-1}. \qquad (\text{by \eqref{e-comid1}}) 
\end{align*}
Similarly, we have
\begin{align*}
S_2 &= \sum_{l=1}^n  U_{i,l} (\TC^{(n)})_{l,l+1} (U^{-1})_{l,j} + \sum_{l=1}^{n-1} U_{i,l} (\TC^{(n)})_{l,l+1} (U^{-1})_{l+1,j} \\
&= \sum_{l=1}^n  U_{i,l} (\TC^{(n)})_{l,l+1} (-1)^{j+l} \binom{n-l-1}{n-j-1}. 
\end{align*}
Using Equation \eqref{e-comid2}, we can further compute $S_2$ as:
\begin{align*}
S_2 &= \sum_{l=1}^n  U_{i,l} (-1)^{j+l} (n+1-l) (n-l) \binom{n-l-1}{n-j-1} \\
&= \sum_{l=1}^n  U_{i,l} (-1)^{j+l} (n+1-l) (j-l+1) \binom{n-l}{n-j-1}.
\end{align*}
Then,
\begin{align*}
(U \TC^{(n)} U^{-1})_{i,j} = S_1 + S_2 
&= j \sum_{l=1}^n (-1)^{j+l} \binom{n-i}{n-l} \binom{n-l}{n-j-1} (n+1-l) \\
&= j \sum_{l=1}^{n} (-1)^{j+l} \binom{n-i}{n-j-1} \binom{j+1-i}{j+1-l} (n+1-l). \qquad (\text{by \eqref{e-comid3}})
\end{align*}
It is clear that the above formula equals $0$ when $i > j + 1$ and equals $(1-i)(n+1-i)$ when $i = j + 1$. When $i \le j$, we can further write
\begin{align*}
(U \TC^{(n)} U^{-1})_{i,j} &= j \sum_{l=i}^{j+1} (-1)^{j+l} \binom{n-i}{n-j-1} \binom{j+1-i}{j+1-l} ((n-j) + (j+1-l)) \\
&= - j (n-j) \binom{n-i}{n-j-1} \sum_{l=i}^{j+1} \binom{j+1-i}{j+1-l} (-1)^{j+1-l} \\
& \quad + j (j+1-i) \binom{n-i}{n-j-1} \sum_{l=i}^{j} \binom{j-i}{j-l} (-1)^{j-l} \qquad  (\text{by \eqref{e-comid2}}) \\
&= \binom{n-i}{n-j-1} (- j (n-j) (1+(-1))^{j+1-i} + j (j+1-i) (1+(-1))^{j-i}) \\
&= \begin{cases}
     i(n-i), & \mbox{if } i=j; \\
     0, & \mbox{if } i<j.
   \end{cases}
\end{align*}
This completes the proof.
\end{proof}

As a byproduct, we have the following result.

\begin{prop}\label{prop-lowpart}
Let $U$ be the matrix with entries as in \eqref{e-U}, and $M$ be a matrix satisfying $M_{i,j} =0$ for all $i-j \ge 2$. Then we have $(UMU^{-1})_{i,j} = M_{i,j}$ for all $i - j \ge 1$.
\end{prop}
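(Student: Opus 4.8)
The plan is to expand the $(i,j)$-entry of $U M U^{-1}$ as a double sum and then read off which index triples can contribute by tracking the support (the nonvanishing pattern) of each factor. Writing
\[
(U M U^{-1})_{i,j} = \sum_{l=1}^{n} \sum_{m=1}^{n} U_{i,l}\, M_{l,m}\, (U^{-1})_{m,j},
\]
I would record three constraints under which the summand can be nonzero. First, $U$ is upper triangular, since $U_{i,l} = \binom{n-i}{n-l}$ vanishes unless $l \ge i$. Second, by Lemma~\ref{lem-V} the inverse $(U^{-1})_{m,j} = (-1)^{m+j}\binom{n-m}{n-j}$ has the same triangular support, so it vanishes unless $m \le j$. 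Third, the hypothesis $M_{l,m} = 0$ for $l - m \ge 2$ forces $m \ge l - 1$. Chaining these together gives $i \le l \le m + 1 \le j + 1$, so the whole sum is supported on the range $i \le l \le j + 1$.

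From here the two cases are immediate. If $i - j \ge 2$, then $i > j + 1$ and the range $i \le l \le j+1$ is empty, whence $(U M U^{-1})_{i,j} = 0 = M_{i,j}$. If $i = j + 1$, the inequalities collapse to a single choice: $l = j+1 = i$ and $m = j$. Since $U_{i,i} = \binom{n-i}{n-i} = 1$ and $(U^{-1})_{j,j} = \binom{n-j}{n-j} = 1$, the lone surviving term equals $M_{j+1,j} = M_{i,j}$. This yields $(U M U^{-1})_{i,j} = M_{i,j}$ for all $i - j \ge 1$, as claimed.

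I expect no genuine obstacle here: unlike the proof of Theorem~\ref{thm-main}, no binomial summation identity is needed, because the triangularity of $U$ and $U^{-1}$ together with the Hessenberg condition on $M$ already pin the support down to at most one term. The only point worth checking carefully is that the three conditions combine to force $l \le j+1$ rather than merely $l \le j$; it is precisely this extra slack of $+1$ that lets the first subdiagonal $i = j+1$ survive untouched while everything strictly below it is annihilated. I would also remark that the argument uses nothing about $U$ beyond its being upper triangular with unit diagonal, so the conclusion holds verbatim for conjugation by any such matrix.
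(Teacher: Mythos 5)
Your proof is correct and follows essentially the same route as the paper: expand the double sum, use the upper-triangular supports of $U$ and $U^{-1}$ together with the Hessenberg condition on $M$ to pin the surviving indices down to at most the single term $(l,m)=(i,j)$ when $i=j+1$ (and to none when $i-j\ge 2$). Your explicit chaining of the inequalities, and the closing observation that only upper-triangularity with unit diagonal is used, merely make explicit what the paper leaves as ``one can check.''
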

Proposition \ref{prop-lowpart} can be visualized as
\[
U \left(\begin{array}{ccccc}
    * & * & * & \cdots & * \\
    M_{2,1} & * & * & \cdots & * \\
    0 & M_{3,2} & * & \cdots & * \\
    \vdots & \ddots & \ddots & \ddots & \vdots \\
    0 & \cdots & 0 & M_{n,n-1} & *
  \end{array}\right) U^{-1}
=
\left(\begin{array}{ccccc}
    \triangle & \triangle & \triangle & \cdots & \triangle \\
    M_{2,1} & \triangle & \triangle & \cdots & \triangle \\
    0 & M_{3,2} & \triangle & \cdots & \triangle \\
    \vdots & \ddots & \ddots & \ddots & \vdots \\
    0 & \cdots & 0 & M_{n,n-1} & \triangle
\end{array}\right),
\]
where ``$*$" and ``$\triangle$" stand for unspecified entries. 
\begin{proof}[Proof of Proposition \ref{prop-lowpart}]
Consider the $(i,j)$-entry of $UMU^{-1}$:
\[
(UMU^{-1})_{i,j} = \sum_{1 \le l,m \le n} U_{i,l} M_{l,m} (U^{-1})_{m,j} = \sum_{1 \le l,m \le n} (-1)^{m+j} \binom{n-i}{n-l} \binom{n-m}{n-j} M_{l,m}.
\]
For fixed $i$ and $j$, we have $\binom{n-i}{n-l} \binom{n-m}{n-j} = 0$ unless $l \ge i$ and $m \le j$.

In the $i - j \ge 2$ case, the condition $l \ge i$ and $m \le j$ implies that $l - m \ge i - j \ge 2$. Hence $M_{l,m} = 0$. Therefore, $(UMU^{-1})_{i,j} = 0 = M_{i,j}$.

In the $i - j = 1$ case, one can check that
\[
(-1)^{m+i-1} \binom{n-i}{n-l} \binom{n-m}{n-i+1} M_{l,m} = \begin{cases}
                                                             M_{i,i-1}, & \mbox{if } l=i \text{ and } m=i-1; \\
                                                             0, & \mbox{otherwise}.
                                                           \end{cases}
\]
And hence $(UMU^{-1})_{i, i-1} = M_{i,i-1}$. This completes the proof.
\end{proof}

\section{A generalization of Theorem \ref{thm-main}}\label{sec-gen}

The observation $(\TC^{(n)})_{i,i} = (\TC^{(n)})_{i,i-1} + (\TC^{(n)})_{i,i+1}$ leads us to consider matrix $A$ satisfying $A_{i,i} = r + A_{i,i-1} + A_{i,i+1}$, where $r$ is a parameter. Further exploration gives Theorem \ref{thm-gen}.

\begin{thm}\label{thm-gen}
Let $n$ be a positive integer. Suppose $\{a_i^{(t)}\} (1 \le t \le n-1)$ and $\{b_i\}$ are sequences of real numbers satisfying that $b_i = 0$ if $i \le 1$ or $i > n$, and $a_i^{(t)} = 0$ if $i+t > n$ or $i \le 0$. Additionally, for any $0 \le k \le n-2$, there exists a constant number $\lambda(k)$ such that
\begin{equation}\label{e-condi}
\begin{aligned}
b_{i+1} - b_i & + \sum_{t=1}^{n-1} \sum_{l=0}^{t-1} (-1)^{l+t} \binom{t-l+k}{t-1} \binom{t-1}{l} a_{i+1-l+k}^{(t)} \\
& -  \sum_{t=1}^{n-1} \sum_{l=0}^{t-1} (-1)^{l+t} \binom{t-1-l+k}{t-1} \binom{t-1}{l} a_{i-l+k}^{(t)} = \lambda(k)
\end{aligned}
\end{equation}
for every $1 \le i \le n-k$. Let $U$ be the matrix given by \eqref{e-U}, and $A$ be the $n \times n$ matrix with entries
\[
A_{i,j} = \begin{cases}
            r + b_i + \sum_{t=1}^{n-1} (-1)^{t-1} a_i^{(t)},&\mbox{if } i=j;\\
            b_i,& \mbox{if } i=j+1; \\
            a_i^{(t)}, & \mbox{if } i=j-t \ (1 \le t \le n-1);\\
            0, &\mbox{otherwise},
          \end{cases}
\]
where $r$ is a parameter. Then
\[
(U A U^{-1})_{i,j} = \begin{cases}
            r + \sum_{k=0}^{n-i-1} \lambda(k), & \mbox{if }\ i = j; \\
            b_i, & \mbox{if }\ i = j + 1; \\
            0, & \mbox{otherwise}.
          \end{cases}
\]
In particular, if $a_i^{(t)} = 0$ for any $2 \le t \le n-1$, then
\[
(U A U^{-1})_{i,j} = \begin{cases}
            r + a_i^{(1)} + (n-i) (b_{i+1} - b_i), & \mbox{if }\ i = j; \\
            b_i, & \mbox{if }\ i = j + 1; \\
            0, & \mbox{otherwise}.
          \end{cases}
\]
\end{thm}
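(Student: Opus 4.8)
The plan is to compute the entries of $UAU^{-1}$ directly, just as in the proof of Theorem~\ref{thm-main}, and to package the answer so that hypothesis \eqref{e-condi} applies verbatim. Throughout, write $\Lambda_{i,k}$ for the left-hand side of \eqref{e-condi}, so that the hypothesis reads $\Lambda_{i,k}=\lambda(k)$. First I would dispose of the part on and below the subdiagonal: since $A_{i,j}=0$ whenever $i-j\ge2$, Proposition~\ref{prop-lowpart} gives $(UAU^{-1})_{i,j}=A_{i,j}$ for all $i\ge j+1$, so the subdiagonal equals $b_i$ and everything strictly below it vanishes. This reduces the theorem to evaluating $(UAU^{-1})_{i,j}$ for $i\le j$ and showing that it equals $0$ when $i<j$ and $r+\sum_{k=0}^{n-i-1}\lambda(k)$ when $i=j$.

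For $i\le j$ I would expand $(UAU^{-1})_{i,j}=\sum_{l,m}(-1)^{m+j}\binom{n-i}{n-l}\binom{n-m}{n-j}A_{l,m}$ band by band, using the splitting $A=rI+B+\sum_{t=1}^{n-1}C^{(t)}$ dictated by the diagonal relation $A_{i,i}=r+b_i+\sum_t(-1)^{t-1}a_i^{(t)}$: here $B$ carries $b_i$ on both the diagonal and the subdiagonal, and $C^{(t)}$ carries $a_i^{(t)}$ on the $t$-th superdiagonal together with $(-1)^{t-1}a_i^{(t)}$ on the diagonal. The $B$-part is routine: pairing its two contributions produces $\binom{n-l}{n-j}-\binom{n-l+1}{n-j}=-\binom{n-l}{n-j-1}$ by Pascal's rule \eqref{e-comid1}, after which \eqref{e-comid3} factors out $\binom{n-i}{n-j-1}$ and collapses the $l$-sum into a finite difference of $\{b_i\}$ (for instance yielding $(n-i)(b_{i+1}-b_i)$ on the diagonal). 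The $C^{(t)}$-parts are the crux: pairing gives $\binom{n-l}{n-j}-\binom{n-l-t}{n-j}$, and the shifted term $\binom{n-l-t}{n-j}$ does not factor through \eqref{e-comid3} directly.

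The goal of the main computation is to prove the combinatorial identity that rewrites each entry with $i\le j$ as a $\mathbb{Z}$-linear combination of the quantities $\Lambda_{i',k}$: on the diagonal the combination telescopes in $k$ to $r+\sum_{k=0}^{n-i-1}\Lambda_{i,k}$, whereas for $i<j$ it is a combination in which, for each fixed $k$, the coefficients across the row-indices $i'$ sum to zero. Granting this, the hypothesis finishes everything at once, since substituting $\Lambda_{i',k}=\lambda(k)$ turns the diagonal into $r+\sum_{k=0}^{n-i-1}\lambda(k)$ and annihilates every entry with $i<j$. The same diagonal telescoping, carried out against the boundary conditions $a_m^{(t)}=0$ for $m>n-t$ and $b_m=0$ outside $2\le m\le n$, also collapses to the closed form $r+(n-i)(b_{i+1}-b_i)+\sum_t(-1)^{t-1}a_i^{(t)}$; specializing $a_i^{(t)}=0$ for $t\ge2$ then yields the ``in particular'' formula, because $\sum_{k=0}^{n-i-1}\lambda(k)$ reduces to $a_i^{(1)}+(n-i)(b_{i+1}-b_i)$.

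The step I expect to be the main obstacle is extracting those coefficients in the $C^{(t)}$-parts. Since $\binom{n-l-t}{n-j}$ misaligns the summation index, I would first expand the product $\binom{n-i}{n-l}\binom{n-l-t}{n-j}$ as $\sum_u(-1)^u\binom{t+u-1}{u}\binom{n-i}{n-j-u}\binom{j+u-i}{j+u-l}$ (Vandermonde convolution followed by \eqref{e-comid3}), and only then carry out the $l$-sum. Tracking the resulting multi-index sums, and verifying both that the surviving weights are exactly $\binom{t-l+k}{t-1}\binom{t-1}{l}$ and that the $k$-slice coefficients sum to zero off the diagonal, is the technical heart of the argument. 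It is precisely here that \eqref{e-condi} is indispensable: the off-diagonal cancellation cannot be seen band by band, but only after all the $C^{(t)}$ and $B$ contributions are assembled into the $\Lambda_{i',k}$ and the hypothesis forces these to be independent of $i'$.
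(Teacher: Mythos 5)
Your reduction of the strictly-lower part via Proposition~\ref{prop-lowpart} is fine ($A$ does satisfy $A_{i,j}=0$ for $i-j\ge 2$), and your sanity checks of the diagonal formula are consistent with the special case of Theorem~\ref{thm-main}. But the proposal has a genuine gap at exactly the point you flag as ``the technical heart'': the claim that each entry $(UAU^{-1})_{i,j}$ with $i\le j$ is a $\mathbb{Z}$-linear combination of the quantities $\Lambda_{i',k}$, telescoping to $r+\sum_{k=0}^{n-i-1}\Lambda_{i,k}$ on the diagonal and having $k$-slice coefficients summing to zero off it, is asserted but never established. Everything downstream (``Granting this, the hypothesis finishes everything at once'') rests on this identity, and it is precisely the nontrivial content of the theorem: the weights $\binom{t-l+k}{t-1}\binom{t-1}{l}$ in \eqref{e-condi} are not something one can read off from a single Vandermonde expansion of $\binom{n-i}{n-l}\binom{n-l-t}{n-j}$ followed by \eqref{e-comid3}; you would need to carry out the resulting triple sum over $l$, $u$, and the band index $t$, show the surviving coefficients organize themselves by the parameter $k$, and prove the off-diagonal cancellation --- none of which is done. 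You also need to justify that there is no residual ``constant'' part outside the span of the $\Lambda_{i',k}$. As written, the proposal is a plan, not a proof.

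For comparison, the paper avoids this direct multi-band computation entirely. It proceeds by induction on $n$: conjugating $A$ by explicit bidiagonal matrices $P=Q^{-1}$ (elementary row/column operations) produces a block matrix whose $(n-1)\times(n-1)$ block $A_1$ has band width reduced by one, with new superdiagonal entries $c_i^{(s)}$ given by \eqref{e-ci}; the key lemmas show that $A_1$ satisfies hypothesis \eqref{e-condi} with $k$ replaced by $k+1$ and $r$ replaced by $r+\lambda(0)$, and that $U$ factors as $\mathrm{diag}(U_1,1)\,P$. This explains structurally why the hypothesis is indexed by $k$ (it counts induction steps) and replaces your single large identity by the more tractable one-step identity \eqref{e-condi-c1}. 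If you want to pursue your direct route, you would essentially have to reprove the composite of all these induction steps in one shot; that may be possible, but it is not easier, and until the coefficient extraction is actually carried out and verified, the argument is incomplete.
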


If we set $r=0$, $b_i = (n+1-i)(1-i)$, and $a_i^{(t)} = \begin{cases}
                                (n+1-i)(n-i), & \mbox{if } t=1; \\
                                0, & \mbox{if } 2 \le t \le n-1,
                              \end{cases}$
then Theorem \ref{thm-gen} reduces to Theorem \ref{thm-main}.

Given a positive integer $n$, let
\[
f_j := \begin{cases}
         e_1, & \mbox{if } j=1; \\
         e_{j-1} + e_j, & \mbox{if } 2 \le j \le n
       \end{cases}
\qquad \text{and} \qquad
g_j := \sum_{i=1}^{j} (-1)^{i+j} e_i \ (1 \le j \le n),
\]
where $e_i$ is the standard $i$th unit vector of dimension $n$ for each $1 \le i \le n$. Let $P$ and $Q$ be the $n \times n$ matrices defined by
\begin{equation}\label{e-PQ}
P := (f_1, f_2, \dots, f_n) 
\qquad \text{and} \qquad
Q := (g_1, g_2, \dots, g_n).
\end{equation}
One can check that
\begin{equation}\label{e-PQ-1}
P = Q^{-1}.
\end{equation}

\begin{lem}\label{lem-PU1}
Let $P$ and $U$ be the matrices with entries as in \eqref{e-PQ} and \eqref{e-U}, respectively. Then we have
\[
\left(\begin{array}{cc}
        U_1 & O \\
        O & 1
      \end{array}\right) P = U,
\]
where $U_1$ is the $(n-1) \times (n-1)$ nonsingular upper triangular matrix with entries
\begin{equation}\label{e-U1}
(U_1)_{i,j} = \binom{n-1-i}{n-1-j}.
\end{equation}
\end{lem}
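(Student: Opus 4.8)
The plan is to verify the identity entrywise, exploiting the fact that $P$ is upper bidiagonal. Denote the block matrix on the left by $B$, so that $B_{i,j} = (U_1)_{i,j} = \binom{n-1-i}{n-1-j}$ for $1 \le i,j \le n-1$, while $B_{n,n} = 1$ and every remaining entry in the last row and last column vanishes. Since the $j$th column of $P$ is $f_j$, the $j$th column of $BP$ is $Bf_j$; for $j \ge 2$ this is $Be_{j-1} + Be_j$, the sum of the $(j-1)$th and $j$th columns of $B$, and for $j=1$ it is simply the first column of $B$. Thus the whole claim reduces to checking
\[
(BP)_{i,j} = B_{i,j-1} + B_{i,j} = \binom{n-i}{n-j} = U_{i,j} \quad (j \ge 2), \qquad (BP)_{i,1} = B_{i,1} = U_{i,1}.
\]

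In the generic range $1 \le i \le n-1$, $2 \le j \le n-1$, I would substitute the binomial entries of $B$ and apply Pascal's rule \eqref{e-comid1} with the observation $n-1-j = (n-j)-1$:
\[
B_{i,j-1} + B_{i,j} = \binom{n-1-i}{n-j} + \binom{n-1-i}{n-j-1} = \binom{n-i}{n-j} = U_{i,j},
\]
which is the single substantive step of the whole argument. The column $j=1$ is immediate, since $B_{i,1} = \binom{n-1-i}{n-2}$ equals $1$ when $i=1$ and $0$ otherwise, exactly matching $U_{i,1} = \binom{n-i}{n-1}$.

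The only place that needs genuine care is the interface produced by the block structure of $B$ together with the exceptional column $f_1$ of $P$, namely the last row $i=n$ and the last column $j=n$, where the uniform Pascal identity no longer governs the relevant entries (they are instead dictated by the isolated $1$ at position $(n,n)$ and the surrounding zeros). I would dispatch these by direct inspection: for $1 \le i \le n-1$, $(BP)_{i,n} = B_{i,n-1} + B_{i,n} = \binom{n-1-i}{0} + 0 = 1 = \binom{n-i}{0} = U_{i,n}$; for $2 \le j \le n-1$, $(BP)_{n,j} = B_{n,j-1} + B_{n,j} = 0 = U_{n,j}$, with $(BP)_{n,1} = B_{n,1} = 0 = U_{n,1}$ as well; and finally $(BP)_{n,n} = B_{n,n-1} + B_{n,n} = 0 + 1 = 1 = U_{n,n}$. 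Combining the generic range with these boundary checks gives $BP = U$ in every entry. I expect no conceptual obstacle beyond this bookkeeping: the mathematical content is entirely the one application of Pascal's rule, and everything else is matching the boundary conventions forced by the block form.
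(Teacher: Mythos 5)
Your proposal is correct and follows essentially the same route as the paper: both arguments use the bidiagonal column structure of $P$ to reduce each entry of the product to $B_{i,j-1}+B_{i,j}$ and then apply Pascal's rule \eqref{e-comid1}, with the last row, last column, and first column checked directly. Your treatment of the boundary cases is just a more explicit version of the paper's ``easy to check'' step, so no substantive difference remains.
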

\begin{proof}
Let $D = \left(\begin{array}{cc}
U_1 & O \\
O & 1
\end{array}\right) P$. It is easy to check that
\[
D_{i,n} = U_{i,n} = 1,
\quad \text{ and } \quad
D_{n,j} = U_{n,j} = \begin{cases}
                        1, & \mbox{if } j = n; \\
                        0, & \mbox{if } j < n.
                    \end{cases}
\]
For $i, j \le n-1$, we have
\begin{align*}
D_{i,j} &= \sum_{l=1}^{n-1} (U_1)_{i,l} P_{l,j} = (U_1)_{i,j-1} P_{j-1,j} + (U_1)_{i,j} P_{j,j} \\
&= \binom{n-1-i}{n-j} + \binom{n-1-i}{n-1-j} = \binom{n-i}{n-j}  & \text{(by \eqref{e-comid1})}\\
&= U_{i,j}.
\end{align*}
The proof is completed.
\end{proof}

\begin{proof}[Proof of Theorem \ref{thm-gen}]
We prove the theorem by induction on $n$. It is trivial for $n = 1$ or $2$. Assume the theorem holds for $n-1$. We perform elementary operations on $A$ as follows:
\begin{enumerate}
  \item[Step 1.] (Column operations) Add column $1$ multiplied by $-1$ to column $2$, then column $2$ multiplied by $-1$ to column $3$, $\dots$,  column $n-1$ multiplied by $-1$ to column $n$.

  \item[Step 2.] (Row operations) Add row $2$ to row $1$, then row $3$ to row $2$, $\dots$, row $n$ to row $n-1$.
\end{enumerate}
These operations convert $A$ into
\[
\left(\begin{array}{cc}
  A_1 & O \\
  B & r
\end{array}\right),
\]
where $B = (0, \dots, 0, b_n) \in \RR^{1 \times (n-1)}$, and $A_1$ is the $(n-1) \times (n-1)$ matrix whose entries are given by
\[
(A_1)_{i,j} = \begin{cases}
                r + b_{i+1} + \sum_{t=1}^{n-1} (-1)^{t-1} a_i^{(t)}, & \mbox{if } i = j;   \\
                b_i, & \mbox{if } i = j+1;  \\
                c_i^{(s)}, & \mbox{if } i = j-s \ (1 \le s \le n-2); \\
                0, & \mbox{otherwise}
              \end{cases}
\]
with
\begin{equation}\label{e-ci}
c_i^{(s)} = (-1)^{s} \Big( \sum_{t=s+1}^{n-1} (-1)^{t-1} a_i^{(t)} - \sum_{t=s}^{n-1} (-1)^{t-1} a_{i+1}^{(t)} \Big).
\end{equation}
Formula \eqref{e-ci} holds for $1 \le i \le n-1$, and it is easy to see that $c_i^{(s)} = 0$ if $s+i>n-1$. We further set $c_i^{(s)} = 0$ for $i \le 0$. Performing the above elementary operations on $A$ is equivalent to computing the matrix product $PAQ$. Thus, we have
\begin{equation}\label{e-PAQ}
P A Q = \left(\begin{array}{cc}
          A_1 & O \\
          B & r
        \end{array}\right).
\end{equation}
By \eqref{e-condi},
\begin{align*}
& \quad \lambda(0) - b_{i+1} + b_i + \sum_{s=1}^{n-2} (-1)^{s-1} c_i^{(s)} \\
&= \sum_{t=1}^{n-1} \sum_{l=0}^{t-1} (-1)^{l+t} \binom{t-l}{t-1} \binom{t-1}{l} a_{i+1-l}^{(t)} -  \sum_{t=1}^{n-1} \sum_{l=0}^{t-1} (-1)^{l+t} \binom{t-1-l}{t-1} \binom{t-1}{l} a_{i-l}^{(t)} \\
& \quad - \sum_{s=1}^{n-2} \Big( \sum_{t=s+1}^{n-1} (-1)^{t-1} a_i^{(t)} - \sum_{t=s}^{n-1} (-1)^{t-1} a_{i+1}^{(t)} \Big) \\
&= \sum_{t=1}^{n-2} (-1)^t t a_{i+1}^{(t)} + \sum_{t=1}^{n-1} (-1)^{t-1} (t - 1) a_{i}^{(t)} - \sum_{t=1}^{n-1} (-1)^t a_{i}^{(t)} - \sum_{t=2}^{n-1} \sum_{s=1}^{t-1} (-1)^{t-1} a_i^{(t)} \\
& \quad + \sum_{t=1}^{n-2} \sum_{s=1}^{t} (-1)^{t-1} a_{i+1}^{(t)}\\
&= \sum_{t=1}^{n-2} (-1)^t t a_{i+1}^{(t)} + \sum_{t=1}^{n-1} (-1)^{t-1} (t - 1) a_{i}^{(t)} - \sum_{t=1}^{n-1} (-1)^t a_{i}^{(t)} - \sum_{t=2}^{n-1} (-1)^{t-1} (t-1) a_i^{(t)} \\
& \quad + \sum_{t=1}^{n-2} (-1)^{t-1} t a_{i+1}^{(t)} \\
&= \sum_{t=1}^{n-1} (-1)^{t-1} a_i^{(t)},
\end{align*}
where the second equation holds by $a_{i+1}^{(n-1)} = 0$ since $i+n>n$, $\binom{t-l}{t-1} = 0$ if $l > 1$, and $\binom{t-1-l}{t-1} = 0$ if $l > 0$. Therefore, the entries of $A_1$ can be rewritten as
\[
(A_1)_{i,j} = \begin{cases}
                (r + \lambda(0))+ b_i + \sum_{s=1}^{n-2} (-1)^{s-1} c_i^{(s)}, & \mbox{if } i = j;  \\
                b_i, & \mbox{if } i = j+1; \\
                c_i^{(s)}, & \mbox{if } i = j-s \ (1 \le s \le n-2);   \\
                0, & \mbox{otherwise.}
              \end{cases}
\]
We claim that
\begin{equation}\label{e-condi-c1}
\begin{aligned}
& \quad \sum_{s=1}^{n-2} \sum_{l=0}^{s-1} (-1)^{l+s} \binom{s-1-l+k}{s-1} \binom{s-1}{l} c_{i-l+k}^{(s)} \\
&= \sum_{t=1}^{n-1} \sum_{l=0}^{t-1} (-1)^{l+t} \binom{t-l+k}{t-1} \binom{t-1}{l} a_{i+1-l+k}^{(t)}.
\end{aligned}
\end{equation}
Direct substitution yields
\begin{align*}
& \quad \sum_{s=1}^{n-2} \sum_{l=0}^{s-1} (-1)^{l+s} \binom{s-1-l+k}{s-1} \binom{s-1}{l} c_{i-l+k}^{(s)} \\
&= \sum_{s=1}^{n-2} \sum_{l=0}^{s-1} (-1)^{l} \binom{s-1-l+k}{k} \binom{k}{l} \Big( \sum_{t=s+1}^{n-1} (-1)^{t-1} a_{i-l+k}^{(t)} - \sum_{t=s}^{n-1} (-1)^{t-1} a_{i+1-l+k}^{(t)} \Big),
\end{align*}
where the equation holds by \eqref{e-comid3}. The computation is naturally divided into two parts, denoted $S_1$ and $S_2$, respectively. To compute $S_1$ and $S_2$, we need another well-known combinatorial identity (the proof is omitted):
\begin{equation}\label{e-comid4}
\binom{n+1}{k+1} = \sum_{i=k}^{n} \binom{i}{k}.
\end{equation}
Then
\begin{align*}
S_1 &= \sum_{s=1}^{n-2} \sum_{l=0}^{s-1} \sum_{t=s+1}^{n-1} (-1)^{l+t-1} \binom{s-1-l+k}{k} \binom{k}{l} a_{i-l+k}^{(t)} \\
&= \sum_{t=2}^{n-1} \sum_{l=0}^{t-2} \sum_{s=l+1}^{t-1} (-1)^{l+t-1} \binom{s-1-l+k}{k} \binom{k}{l} a_{i-l+k}^{(t)} \\
&= \sum_{t=2}^{n-1} \sum_{l=0}^{t-2} (-1)^{l+t-1} \binom{t-1-l+k}{k+1} \binom{k}{l} a_{i-l+k}^{(t)} & \text{(by \eqref{e-comid4})} \\
&= \sum_{t=2}^{n-1} \sum_{l=1}^{t-1} (-1)^{l+t} \binom{t-l+k}{k+1} \binom{k}{l-1} a_{i+1-l+k}^{(t)} \\
&= \sum_{t=1}^{n-1} \sum_{l=0}^{t-1} (-1)^{l+t} \binom{t-l+k}{k+1} \binom{k}{l-1} a_{i+1-l+k}^{(t)},
\end{align*}
where the last equation holds by $\binom{k}{l-1} = 0$ if $l=0$. Observe that $a_{i+1-l+k}^{(n-1)} = 0$ unless $i+1-l+k=1$, i.e., $k = l -i < l$, which implies that $\binom{k}{l} = 0$. This leads the fact
\begin{equation}\label{e-n-1=0}
\binom{k}{l} a_{i+1-l+k}^{(n-1)} = 0.
\end{equation}
Then
\begin{align*}
S_2 &= \sum_{s=1}^{n-2} \sum_{l=0}^{s-1} \sum_{t=s}^{n-1} (-1)^{l+t} \binom{s-1-l+k}{k} \binom{k}{l} a_{i+1-l+k}^{(t)} \\
&= \sum_{s=1}^{n-2} \sum_{l=0}^{s-1} \sum_{t=s}^{n-2} (-1)^{l+t} \binom{s-1-l+k}{k} \binom{k}{l} a_{i+1-l+k}^{(t)} & (\text{by \eqref{e-n-1=0}})\\
&= \sum_{t=1}^{n-2} \sum_{l=0}^{t-1} \sum_{s=l+1}^{t} (-1)^{l+t} \binom{s-1-l+k}{k} \binom{k}{l} a_{i+1-l+k}^{(t)} \\
&= \sum_{t=1}^{n-2} \sum_{l=0}^{t-1} (-1)^{l+t} \binom{t-l+k}{k+1} \binom{k}{l} a_{i+1-l+k}^{(t)}  & \text{(by \eqref{e-comid4})} \\
&= \sum_{t=1}^{n-1} \sum_{l=0}^{t-1} (-1)^{l+t} \binom{t-l+k}{k+1} \binom{k}{l} a_{i+1-l+k}^{(t)}. & (\text{by \eqref{e-n-1=0}})
\end{align*}
Thus,
\begin{align*}
S_1 + S_2 &= \sum_{t=1}^{n-1} \sum_{l=0}^{t-1} (-1)^{l+t} \binom{t-l+k}{k+1} \Big( \binom{k}{l} + \binom{k}{l-1} \Big) a_{i+1-l+k}^{(t)} \\
&= \sum_{t=1}^{n-1} \sum_{l=0}^{t-1} (-1)^{l+t} \binom{t-l+k}{k+1} \binom{k+1}{l} a_{i+1-l+k}^{(t)} & (\text{by \eqref{e-comid1}})\\
&= \sum_{t=1}^{n-1} \sum_{l=0}^{t-1} (-1)^{l+t} \binom{t-l+k}{t-1} \binom{t-1}{l} a_{i+1-l+k}^{(t)}. & (\text{by \eqref{e-comid3}})
\end{align*}
That is, \eqref{e-condi-c1} holds. Replacing $k$ by $k+1$ in \eqref{e-condi-c1} yields
\begin{equation}\label{e-condi-c2}
\begin{aligned}
& \quad \sum_{s=1}^{n-2} \sum_{l=0}^{s-1} (-1)^{l+s} \binom{s-l+k}{s-1} \binom{s-1}{l} c_{i+1-l+k}^{(s)} \\
&= \sum_{t=1}^{n-1} \sum_{l=0}^{t-1} (-1)^{l+t} \binom{t-l+k+1}{t-1} \binom{t-1}{l} a_{i+2-l+k}^{(t)}.
\end{aligned}
\end{equation}
By \eqref{e-condi}, \eqref{e-condi-c1} and \eqref{e-condi-c2}, we obtain that the entries of $A_1$ fits the condition:
\begin{align*}
b_{i+1} - b_i & + \sum_{s=1}^{n-2} \sum_{l=0}^{s-1} (-1)^{l+s} \binom{s-l+k}{s-1} \binom{s-1}{l} c_{i+1-l+k}^{(s)} \\
& -  \sum_{s=1}^{n-2} \sum_{l=0}^{s-1} (-1)^{l+s} \binom{s-1-l+k}{s-1} \binom{s-1}{l} c_{i-l+k}^{(s)}  = \lambda(k+1).
\end{align*}
By the induction hypothesis, we have
\[
(U_1 A_1 (U_1)^{-1})_{i,j} = \begin{cases}
            r + \lambda(0) + \sum_{k=0}^{(n-1)-i-1} \lambda(k+1) = r + \sum_{k=0}^{n-i-1} \lambda(k), & \mbox{if }\ i = j; \\
            b_i, & \mbox{if }\ i = j + 1; \\
            0, & \mbox{otherwise},
          \end{cases}
\]
where $U_1$ is the matrix with entries as in \eqref{e-U1}. Combining with Lemma \ref{lem-PU1} and Equation \eqref{e-PQ-1}, we have
\begin{align*}
U A U^{-1} &= \left(\begin{array}{cc}
        U_1 & O \\
        O & 1
      \end{array}\right)
      P A Q
      \left(\begin{array}{cc}
        (U_1)^{-1} & O \\
        O & 1
      \end{array}\right) \\
       &= \left(\begin{array}{cc}
        U_1 & O \\
        O & 1
      \end{array}\right)
      \left(\begin{array}{cc}
        A_1 & O \\
        B & r
      \end{array}\right)
      \left(\begin{array}{cc}
        (U_1)^{-1} & O \\
        O & 1
      \end{array}\right) & (\text{by \eqref{e-PAQ}})\\
      &= \left(\begin{array}{cc}
        U_1 A_1 (U_1)^{-1} & O \\
        B (U_1)^{-1} & r
      \end{array}\right)
      = \left(\begin{array}{cc}
        U_1 A_1 (U_1)^{-1} & O \\
        B & r
      \end{array}\right),
\end{align*}
where the last equation holds by the fact $B U_1 = B$. This completes the proof.
\end{proof}

\medskip
\noindent \textbf{Acknowledgments:}
The authors would like to thank Professors Guoce Xin (Capital Normal University) and Arthur L. B. Yang (Nankai University) for helpful discussions. This work was supported by the Postdoctoral Fellowship Program and China Postdoctoral Science Foundation (No. BX20250066).


\begin{thebibliography}{99}
\bibitem{AHZ25}
P. Alexandersson, S. Hopkins and G. Zaimi, \emph{Restricted Birkhoff polytopes and Ehrhart period collapse}, Discrete Comput. Geom. 73 (2025), 62--78.

\bibitem{BV01}
W. Baldoni-Silva and M. Vergne, \emph{Residues formulae for volumes and Ehrhart polynomials of convex polytopes}, arXiv Preprint (2001), arXiv: math/0103097v1.

\bibitem{BP03}
M. Beck  and  D. Pixton, \emph{The Ehrhart polynomial of the Birkhoff polytope}, Discrete Comput. Geom. 30 (2003), 623--637.

\bibitem{CV93}
G. S. Call and D. J. Velleman, \emph{Pascal's Matrices}, Am. Math. Mon. 100 (1993), 372--376.  

\bibitem{CR99}
C. S. Chan and D. P. Robbins, \emph{On the volume of the polytope of doubly stochastic matrices}, Exp. Math. 8 (1999), 291--300.

\bibitem{DLY09}
J. A. De Loera, F. Liu and R. Yoshida, \emph{A generating function for all semi-magic squares and the volume of the Birkhoff polytope}, J. Algebr. Comb. 30 (2009), 113--139.

\bibitem{MacM60}
P. A. MacMahon, \emph{Combinatory analysis}, Chelsea Publishing Co., New York, 1960. MR 25 \# 5003.

\bibitem{Mou00}
J. Mount, \emph{Fast unimodular counting}, Combin. Probab. Comput. 9 (2000), 277--285.

\bibitem{Sta73}
R. P. Stanley, \emph{Linear homogeneous Diophantine equations and magic labelings of graphs}, Duke Math. J. 40 (1973), 607--632.

\bibitem{Sta76}
R. P. Stanley, \emph{Magic labelings of graphs, symmetric magic squares, systems of parameters, and Cohen-Macaulay rings}, Duke Math. J. 43 (1976), 511--531.

\bibitem{XinPhD04}
G. Xin, \emph{The Ring of Malcev-Neumann Series and The Residue Theorem}, Ph.D. thesis, Brandeis University, arXiv Preprint (2004), arXiv: math.CO/0405133.

\bibitem{XZ24+}
G. Xin and C. Zhang, \emph{A variation of the Morris constant term identity}, arXiv Preprint (2024), arXiv: 2409.14356.

\bibitem{XZZZ25}
G. Xin, C. Zhang, Y. Zhou and Y. Zhong, \emph{The constant term algebra of type $A$: the structure}, Adv. Math. 465 (2025), 110154.

\bibitem{Z+}
C. Zhang, \emph{The constant term of the hook shape arising from the Birkhoff polytope}, In preparation.
\end{thebibliography}
\end{document}